\documentclass[11pt]{article}
\usepackage[T1]{fontenc}
\usepackage[utf8]{inputenc}
\usepackage{amsmath,amssymb,amsthm}
\usepackage{geometry}
\usepackage{hyperref}

\newtheorem{theorem}{Theorem}[section]
\newtheorem{proposition}[theorem]{Proposition}
\newtheorem{lemma}[theorem]{Lemma}
\newtheorem{conjecture}[theorem]{Conjecture}

\DeclareMathOperator{\SND}{SND}
\DeclareMathOperator{\WND}{WND}

\begin{document}

\title{\textbf{Hypercube eigenfunctions with two strong nodal domains}}

\author{
Yize He
~and~Qinghong Zhao\thanks{Corresponding author.}
\\[1mm]
\small School of Mathematical Sciences, Huaqiao University, Quanzhou 362021, China}

\date{}
\maketitle

\begin{abstract}

B{\i}y{\i}ko{\u g}lu, Hordijk, Leydold, Pisanski, and Stadler
conjectured that, for every $n\geq3$ and $1\leq i\leq n-2$,
the hypercube $Q_n$ admits a Laplacian eigenfunction with
eigenvalue $2i$ and exactly two strong nodal domains.
In this paper, we confirm this conjecture.

\noindent{\bf Key words}: Hypercube; graph Laplacian; eigenfunction; strong nodal domain\\
\noindent{\bf 2020 Mathematics Subject Classification}: 05C50
\end{abstract}	
\footnotetext{Email-address: hey88040@gmail.com (Y. He) and qzhao@hqu.edu.cn (Q. Zhao).}

\section{Introduction}

All graphs are finite, simple, and undirected. Nodal domains describe how the signs of an eigenfunction are arranged on a graph. Beyond bounding their number in terms of spectral data, one can ask how few nodal domains occur within a prescribed eigenspace. For a connected graph, every Laplacian eigenfunction with a nonzero eigenvalue has both positive and negative values, so two is the smallest possible number of strong nodal domains. Determining whether this minimum is attained requires both sign supports to be connected. We study this question for the hypercube, where the spectrum is explicit but the connectivity of the sign supports depends on the choice of eigenfunction.
For a real-valued function $f$ on $V(G)$, write
\[
S_+(f)=\{x\in V(G):f(x)>0\},\qquad
S_-(f)=\{x\in V(G):f(x)<0\}.
\]
The positive and negative strong nodal domains of $f$ are the connected components of the induced subgraphs $G[S_+(f)]$ and $G[S_-(f)]$, respectively. Weak nodal domains are defined using the sets $\{f\geq0\}$ and $\{f\leq0\}$, retaining only components that contain a vertex at which $f$ is nonzero. We denote the total numbers of strong and weak nodal domains by $\SND(f)$ and $\WND(f)$.

Discrete nodal domain theory relates sign structure to eigenvalue order and multiplicity. Early results include \cite{Friedman1993,DuvalReiner1999}; upper bounds for generalized Laplacians are established in \cite{DGLS2001}, with sharper conclusions for trees in \cite{Biyikoglu2003} and lower bounds in \cite{Berkolaiko2008,XuYau2012}. The monograph \cite{BLS2007} provides a systematic account. More recent work treats signed graphs and symmetric matrices \cite{GeLiu2023,McKenzieUrschel2024} and applies nodal domain methods to equiangular lines \cite{GeLiu2025}. 

For a set $S$ of size $q\geq2$, the Hamming graph $H(n,q)$ has vertex set $S^n$, with two vertices adjacent when they differ in exactly one coordinate. Its Laplacian eigenvalues are $qi$, $0\leq i\leq n$. The hypercube is $Q_n=H(n,2)$, which we represent on $\{-1,1\}^n$; its eigenvalues are therefore $2i$, $0\leq i\leq n$.

B{\i}y{\i}ko{\u g}lu, Hordijk, Leydold, Pisanski, and Stadler \cite{Biyikoglu2004} study the minimum numbers of weak and strong nodal domains in each hypercube eigenspace. They show that, for every $1\leq i\leq n-1$, there exists a $2i$-eigenfunction with exactly two weak nodal domains. For strong nodal domains, they establish the corresponding existence result for $1\leq i\leq n/2$.

The two largest eigenvalues impose restrictions. For $n\geq3$, every eigenfunction with eigenvalue $2(n-1)$ has at least $n$ strong nodal domains \cite{Biyikoglu2004}. 
Every Laplacian eigenfunction of $Q_n$ with eigenvalue $2n$
has exactly $2^n$ strong nodal domains
\cite[Theorem~2]{Biyikoglu2004}.
Thus, neither $i=n-1$ nor $i=n$ admits an eigenfunction
with exactly two strong nodal domains when $n\geq3$.

Motivated by numerical experiments, B{\i}y{\i}ko{\u g}lu et al.\ propose the following conjecture.

\begin{conjecture}
\label{conj:hypercube}
{\normalfont\cite{Biyikoglu2004}}
For every $n\geq3$ and $1\leq i\leq n-2$, there exists a Laplacian eigenfunction $f$ on $Q_n$ with eigenvalue $2i$ such that $\SND(f)=2$.
\end{conjecture}

Valyuzhenich and Vorob\'ev \cite{Valyuzhenich2025}  prove the conjecture for $i\leq \frac{2}{3}(n-\frac12)$
when $i$ is odd and for $i\leq \frac{2}{3}(n-1)$ when $i$ is even.
Their construction uses equitable $2$-partitions. Constructions and parameter restrictions for these partitions on hypercubes and Hamming graphs are studied in \cite{FdF2007Colorings,BKMTV2021}. The range attained by their method is tied to the correlation-immunity bound of Fon-Der-Flaass \cite{FdF2007Immunity}. Valyuzhenich and Vorob\'ev also obtain eigenfunctions with exactly two strong nodal domains on $H(n,3)$ for every $n\geq2$ and $1\leq i\leq n-1$, and on $H(n,q)$ for every $n\geq1$, $q\geq4$, and $1\leq i\leq n$, with Laplacian eigenvalue $qi$ in each case. In this paper, we prove Conjecture~\ref{conj:hypercube}.

\begin{theorem}\label{thm:main}

For all integers $n\ge3$ and $1\le i\le n-2$, there exists a function $f:\{-1,1\}^n\to\mathbb{R}$ satisfying $L_{Q_n}f=2if$ such that $S_+(f)$ and $S_-(f)$ are both nonempty and induce connected subgraphs of $Q_n$. In particular, $\SND(f)=2$.
\end{theorem}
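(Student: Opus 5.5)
The plan is to build the required eigenfunctions by induction, using two operations that go from $Q_n$ to $Q_{n+1}=Q_n\times Q_1$. Throughout I use that the $2i$-eigenspace of $L_{Q_n}$ is spanned by the characters $x_S=\prod_{j\in S}x_j$ with $|S|=i$.

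\emph{Trivial lift.} If $f$ works for $(n,i)$, put $F(x,y)=f(x)$. Then $F$ has eigenvalue $2i$ on $Q_{n+1}$, and $S_\pm(F)=S_\pm(f)\times\{\pm1\}$ is connected. This handles $(n,i)\to(n+1,i)$.

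\emph{Twisted lift.} To handle $(n,i)\to(n+1,i+1)$, put
\[
F(x,y)=y\,f(x)+\varepsilon\, h(x),
\]
where $h$ is a $2(i+1)$-eigenfunction on $Q_n$ and $\varepsilon>0$ is small. This is possible since $i+1\le n-1$. Both terms are sums of characters of degree $i+1$, so $F$ has eigenvalue $2(i+1)$.

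Now track signs. If $f(x)\ne0$, then $\operatorname{sign}F(x,y)=y\operatorname{sign}f(x)$. Hence
\[
S_+(F)\supseteq \bigl(S_+(f)\times\{1\}\bigr)\cup\bigl(S_-(f)\times\{-1\}\bigr),
\]
and these two pieces are separately connected. Let $Z$ be the zero set of $f$. The remaining positive vertices are $(z,\pm1)$ with $z\in Z$ and $h(z)>0$. A key observation: at $z\in Z$ the eigen-equation gives $\sum_{w\sim z}f(w)=0$, so every zero of $f$ has either only zero neighbours or neighbours of both signs.

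So if some $z\in Z$ has a nonzero neighbour and $h(z)>0$, then $(z,1)$ is adjacent to $S_+(f)\times\{1\}$ and $(z,-1)$ is adjacent to $S_-(f)\times\{-1\}$. Since $(z,1)\sim(z,-1)$, the two positive pieces are joined. Symmetrically, a $z'$ with $h(z')<0$ joins the negative pieces. We must also ensure every other $(z,\pm1)$ with $h(z)\ne0$ attaches to a domain of its sign. That holds automatically if $z$ has nonzero neighbours, since they come in both signs. Zeros of $f$ with only zero neighbours need separate care.

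\emph{Invariant.} The induction therefore carries the following property: $f$ has $\SND(f)=2$, and some zero $z$ of $f$ has a nonzero neighbour. Ideally also every zero of $f$ at which $h$ will be nonzero has a nonzero neighbour. The choice of $h$ should be a suitable combination of characters $x_S$, $|S|=i+1$. I would try $h(x)=x_S\,g(x)$-type products or simply single characters $x_S$, since these take prescribed signs $\pm1$ at two chosen points $z,z'$ of $Z$ as long as $z\ne z'$. One can then pick $S$ containing a coordinate where $z_j\ne z'_j$ and adjust the rest to fix both signs.

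\emph{Base cases.} For $i=1$, the function $x_1+x_2$ on $Q_3$ (and on $Q_n$) has connected sign sets and zeros $x_1=-x_2$ adjacent to both signs. Starting there, I alternate twisted lifts and trivial lifts to reach every $(n,i)$ with $i\le n-2$. Concretely, $(n-i+1,1)\to(n-i+2,2)\to\cdots\to(n,i)$ uses only twisted lifts. Note $n-i+1\ge3$ is exactly the condition $i\le n-2$.

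\emph{Main obstacle.} The hard part is preserving the invariant through the twisted lift. $F$ must again have a zero with a nonzero neighbour, but generically $F$ has no zeros at all. To force zeros, I would choose $h$ to vanish on part of $Z$, taking $h$ to be a difference of two characters, e.g. $x_S-x_{S'}$. Then $F(z,\pm1)=0$ on those $z$, and these zeros have neighbours $(w,\pm1)$ with $f(w)\ne0$. The delicate check is that such $h$ can simultaneously be positive at one zero of $f$, negative at another, and zero at a third, all within degree $i+1\le n-1$. This check is where I expect the real work, and possibly an explicit choice of $f$ with a large, structured zero set (for example a product $x_1\cdots x_{i-1}(x_{i}+x_{i+1})$-type family) to keep it verifiable.
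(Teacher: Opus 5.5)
The trivial lift is exactly the paper's Lemma~\ref{lem:extension}, and your sign bookkeeping for $F=yf+\varepsilon h$ is correct. The problem is the step you flag as the main obstacle. In your scheme it is not merely unverified; it is impossible in exactly the case $i=n-2$, which is where the new content of the theorem lies.

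\emph{Why the chain is forced.} The pair $(k,k-1)$ never admits two strong nodal domains, and a trivial lift preserves $\SND$. So the only route to $(n,n-2)$ is $(3,1)\to(4,2)\to(5,3)\to\cdots$ by twisted lifts.

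\emph{Why the first step leaves no zeros.} Take any $f\in E_1(3)$. It is odd, and its zero set $Z$ consists of at most two antipodal pairs: two non-antipodal zeros force one coefficient to vanish and the other two to agree in absolute value. Every $h\in E_2(3)$ is even, hence constant on antipodal pairs. Connectivity of $S_+(F)$ genuinely requires some $z\in Z$ with $h(z)>0$. Indeed, a path from $S_+(f)\times\{1\}$ to $S_-(f)\times\{-1\}$ must use an edge $(x,1)\sim(x,-1)$ with both ends positive, and that forces $f(x)=0$ and $h(x)>0$. Likewise $S_-(F)$ needs some $z'\in Z$ with $h(z')<0$. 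Hence $h\neq0$ on all of $Z$, and the resulting $F\in E_2(4)$ has no zeros.

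\emph{Why the second step then fails.} For a nowhere-vanishing $F$, every further twisted lift $G=tF+\varepsilon'h'$ has $S_+(G)=(S_+(F)\times\{1\})\cup(S_-(F)\times\{-1\})$. These are two nonempty pieces with no edge between them. So your construction never reaches $(5,3)$, nor any $(n,n-2)$ with $n\ge5$. Your requirement that $h$ be positive, negative and zero on $Z$ already fails at the first step, whatever $f$ you choose.

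Two smaller errors. A character of even degree cannot separate antipodal points, so ``prescribed signs at any two distinct $z,z'$'' is false. And $x_1\cdots x_{i-1}(x_i+x_{i+1})$ has $2^{i-1}$ positive strong nodal domains, so that family is not a candidate.

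The paper avoids zeros altogether. It treats only the extremal eigenspace $E_{d-2}(d)$ directly and obtains all smaller $i$ by the trivial lift. It takes $f_d=p_dq_d$, where $q_d$ is a weighted sum of adjacent products $x_jx_{j+1}$. The weights make $q_d$ nowhere zero and make its sign decided lexicographically (Lemma~\ref{lem:sgn}).

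The induction adds two coordinates at once, and the new term dominates rather than perturbs. For even $n\ge6$ one has exactly $f_n(x_1,x_2,z)=x_1x_2f_{n-2}(z)+2^{(n-4)/2}p_{n-2}(z)(1+x_1x_3)$. So on $\{x_1=x_3\}$ the sign is $p_{n-2}(z)$, and on $\{x_1=-x_3\}$ the sign is inherited from $f_{n-2}$. Connectivity comes from the inductive hypothesis on the layer $\{x_1=-x_3\}$, with short detours through $\{x_1=x_3\}$ whenever the first coordinate of $z$ flips.

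This is the idea your plan is missing. With $\varepsilon$ small, the old function fixes the sign almost everywhere, so every bridge between pieces must come from its zero set. Along the forced chain that zero set disappears after one step.
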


\section{Construction and proof}\label{sec:proof}

Let \[
 E_i(n)=\{f\colon V(Q_n)\to\mathbb R:L_{Q_n}f=2if\}
\] be the eigenspace of the Laplacian operator on the hypercube $Q_n$corresponding to the eigenvalue $2i$. When $i = n-2$, define the polynomial representing the weighted sum of adjacent terms
\[
 q_n(x)=\sum_{j=1}^{n-1}a_jx_jx_{j+1}.
\]
The coefficients $a_j$ are specified below. Set
\[
p_n(x)=\prod_{j=1}^n x_j,
\qquad f_n(x)=p_n(x)q_n(x).
\]
Since $x_j^2=1$, we have
\begin{align*}
 f_n(x)
 &=\left(\prod_{k=1}^n x_k\right)q_n(x)\\
 &=\sum_{j=1}^{n-1}a_jx_jx_{j+1}\prod_{k=1}^n x_k
 =\sum_{j=1}^{n-1}a_j
   \prod_{k\in[n]\setminus\{j,j+1\}}x_k.
\end{align*}

For any subset $S \subseteq [n] = \{1, 2, \dots, n\}$, define the character monomial $\chi_S(x) = \prod_{j \in S} x_j$. A neighbor $y \sim x$ of a vertex $x$ is obtained by flipping a single coordinate of $x$: if the flipped coordinate belongs to $S$, then $\chi_S(y) = -\chi_S(x)$, in which case $\chi_S(x) - \chi_S(y) = 2\chi_S(x)$; if the flipped coordinate does not belong to $S$, then $\chi_S(y) = \chi_S(x)$, and the difference is 0. Since there are $|S|$ coordinates in the set $S$, there are exactly $|S|$ neighbors corresponding to flipping a coordinate in $S$; therefore,
\[
 L_{Q_n}\chi_S=2|S|\chi_S.
\]
This indicates that each $\chi_S$ is an eigenvector of the Laplacian operator, corresponding to the eigenvalue $2|S|$. 
Each monomial in $f_n$ has degree $n-2$.
Hence $f_n\in E_{n-2}(n)$.
By the following extension lemma, it suffices to construct,
for every $d\ge3$, a function $f_d\in E_{d-2}(d)$
whose positive and negative supports are nonempty and connected.
\begin{lemma}
\label{lem:extension}
{\normalfont\cite{Valyuzhenich2025}}
Let $3\le d\le n$, and let $f_d\in E_{d-2}(d)$
have nonempty connected positive and negative supports.
Then the function
$
F(x_1,\dots,x_n)=f_d(x_1,\dots,x_d)
$
belongs to $E_{d-2}(n)$ and has nonempty connected
positive and negative supports.
\end{lemma}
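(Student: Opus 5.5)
The plan is to check the eigenvalue property and the sign-support properties separately, using the product structure $Q_n = Q_d \times Q_{n-d}$ with coordinates $x=(u,w)$, $u\in\{-1,1\}^d$, $w\in\{-1,1\}^{n-d}$. By definition $F(u,w)=f_d(u)$ does not depend on $w$.

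For the eigenvalue claim, I would expand $f_d$ in the character basis of $Q_d$:
\[
f_d=\sum_{S\subseteq[d],\,|S|=d-2} c_S\chi_S .
\]
This expansion is available because the characters $\chi_S$ form an orthogonal eigenbasis, so $E_{d-2}(d)$ is spanned by the characters with $|S|=d-2$. Each such $\chi_S$, viewed as a function on $\{-1,1\}^n$, is the character $\chi_S$ of $Q_n$ with the same $S\subseteq[d]\subseteq[n]$. By the computation $L_{Q_n}\chi_S=2|S|\chi_S$ above, $F$ therefore lies in $E_{d-2}(n)$.

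The same conclusion also follows directly from the definition of $L_{Q_n}$. Flipping a coordinate among $d+1,\dots,n$ leaves $F$ unchanged, so those neighbors contribute $0$. Flipping one of the first $d$ coordinates reproduces the terms of $L_{Q_d}f_d(u)$. Hence $(L_{Q_n}F)(u,w)=(L_{Q_d}f_d)(u)=2(d-2)F(u,w)$.

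For the sign supports, $S_\pm(F)=S_\pm(f_d)\times\{-1,1\}^{n-d}$, and these sets are nonempty because $S_\pm(f_d)$ are. To show connectivity, take $(u,w)$ and $(u',w')$ in $S_+(F)$. Since $Q_d[S_+(f_d)]$ is connected, there is a path $u=u_0,u_1,\dots,u_m=u'$ inside $S_+(f_d)$, and the vertices $(u_k,w)$ form a path in $Q_n$ that stays inside $S_+(F)$. Next I would flip the coordinates in which $w$ and $w'$ differ, one at a time, keeping the first block fixed at $u'$. Each intermediate vertex $(u',w'')$ has $F=f_d(u')>0$, so this path also stays in $S_+(F)$. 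The argument for $S_-(F)$ is symmetric.

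I do not expect any real obstacle here. The only point needing care is that the path inside $S_\pm(f_d)$ lifts to $Q_n$, and it does because the $w$-coordinate is held fixed along it.
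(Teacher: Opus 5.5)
Your proposal is correct and follows essentially the same route as the paper: the direct Laplacian computation that splits neighbors into the two coordinate blocks, followed by a two-stage path (lift a path in $S_\pm(f_d)$ with $w$ fixed, then flip the differing coordinates of $w$ with $u'$ fixed). Your extra character-expansion argument for the eigenvalue claim is also valid, but it is redundant given the direct computation.
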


\begin{proof}
Fix the target dimension $n \ge 3$. We have $L_{Q_d} f_d = 2(d-2)f_d$. If $d = n$, the conclusion holds trivially. Assume that $d < n$.
Write $x = (u, v)$, where $u \in V(Q_d)$ and $v \in V(Q_{n-d})$. Define $F(u, v) := f_d(u)$. Expanding the graph Laplacian yields
\begin{align*}
(L_{Q_n}F)(u,v)
&= \sum_{X \sim_{Q_n} (u,v)} \bigl(F(u,v) - F(X)\bigr) \\
&= \sum_{u' \sim_{Q_d} u} \bigl(F(u,v) - F(u',v)\bigr) + \sum_{v' \sim_{Q_{n-d}} v} \bigl(F(u,v) - F(u,v')\bigr) \\
&= (L_{Q_d} f_d)(u) + 0 = 2(d-2) F(u, v).
\end{align*}
Thus we have $F \in E_{d-2}(n)$. 

Regarding connectivity, the positive support set is $\{F > 0\} = \{f_d > 0\} \times V(Q_{n-d})$. Consider any two points $(u, v), (u', v') \in \{F > 0\}$:
Since $u,u'\in S_+(f_d)$ and $Q_d[S_+(f_d)]$ is connected, there exists a path $u=u_0\sim u_1\sim\cdots\sim u_s=u'$
such that $f_d(u_j)>0$ for every $0\le j\le s$. It follows that
\[
(u,v) = (u_0, v) \sim (u_1, v) \sim \cdots \sim (u_s, v) = (u', v).
\]
Next, using the connectivity of the hypercube $Q_{n-d}$ itself, we obtain a path connecting $v$ and $v'$—namely $v = v_0 \sim v_1 \sim \cdots \sim v_t = v'$ by successively flipping the coordinates where $v$ and $v'$ differ. This gives the path
\[
(u', v) = (u', v_0) \sim (u', v_1) \sim \cdots \sim (u', v_t) = (u', v').
\]
Since $F(u', v_k) = f_d(u') > 0$ for any $0 \le k \le t$, this path lies entirely within the positive support.
Concatenating these two path segments yields a path from $(u, v)$ to $(u', v')$ that lies entirely within the region $\{F > 0\}$. Therefore, the positive support induces a connected subgraph in $Q_n$. The argument for the negative support is identical.
\end{proof}

By Lemma ~\ref{lem:extension}, to prove Theorem~\ref{thm:main}, it suffices to construct, for each integer $d \ge 3$, a function $f_d \in E_{d-2}(d)$ whose positive and negative supports are both nonempty and connected.
We now specify  $q_n(x)$. Let $m = n-1$ and $l = \lfloor \frac{m}{2} \rfloor$. Define the adjacent product variables as
\[
y_j = x_j x_{j+1} \in \{-1, 1\}, \quad j = 1, 2, \dots, n-1.
\]
Group the adjacent variables into pairs:
$
(y_1, y_2), \ (y_3, y_4), \ \dots, \ (y_{2l-1}, y_{2l}).
$
Each pair sum belongs to $\{-2,0,2\}$.
Define the weighted function $q_n(x)$ based on the parity of the dimension $n$:
When $n$ is even: $m = n-1 = 2l+1$,
\[
q_n(x) = \sum_{r=1}^l 2^{l-r} (y_{2r-1} + y_{2r}) + y_m.
\]
When $n$ is odd: $m = n-1 = 2l$,
\[
q_n(x) = \sum_{r=1}^l 2^{l-r} (y_{2r-1} + y_{2r}) + \sum_{r=1}^l 2^{r-l-1} y_{2r}.
\]

For $q_n(x)$, under this definition, if all pair sums $y_{2r-1}+y_{2r}$ vanish, a single correction term $y_m$ exists in the even case, while a correction term $\sum_{r=1}^l 2^{r-l-1} y_{2r}$ exists in the odd case. So $q_n(x) \neq 0$ holds everywhere.

If the first pair encountered with identical components is the $r$-th pair, and both its terms are $s \in \{-1, 1\}$, then the principal contributions from all preceding pairs are zero, and the contribution of this pair to $q_n(x)$ is $s \cdot 2^{l-r+1}$. The contribution from the later pairs has absolute value at most
$
\sum_{k=r+1}^l 2 \cdot 2^{l-k} = 2^{l-r+1} - 2.
$
In this case, if $n$ is even, the absolute value of the remaining contribution is at most
$
2^{l-r+1} - 2 + 1 = 2^{l-r+1} - 1 < 2^{l-r+1};
$
if $n$ is odd, the absolute value of the remaining contribution is at most
$
2^{l-r+1}-2+\sum_{k=1}^l  \left|2^{k-l-1} y_{2k} \right|= 2^{l-r+1}-2+1 - 2^{-l} <2^{l-r+1}- 1,
$
so the total absolute value of the contributions from the correction term and subsequent terms is also strictly less than $2^{l-r+1}$. Thus, the first pair with identical components completely determines the sign of $q_n(x)$. 

If $y_{2r-1} + y_{2r} = 0$ holds for all $1 \le r \le l$:
When $n$ is even, the sum of the first $l$ pairs is $0$, leaving only the correction term $y_m$; clearly, $\operatorname{sgn} q_n(x) = y_m$.
When $n$ is odd, only the correction term
$\sum_{k=1}^l 2^{k-l-1} y_{2k}$ remains.
Its last term is $\frac{1}{2} y_{2l} = \frac{1}{2} y_m$,
while the sum of the absolute values of the other terms is at most
$\frac{1}{2} - 2^{-l} < \frac{1}{2}$;
thus, the sign is entirely determined by the last term $y_m$,
i.e., $\operatorname{sgn} q_n(x) = y_m$.
Formally, this property can be characterized by the following recurrence.
\begin{lemma}\label{lem:sgn}
Let $n \ge 5$ and $x = (x_1, x_2, z) \in \{-1, 1\}^n$, where $z = (x_3, \dots, x_n)$. Then $$\operatorname{sgn} q_n(x_1, x_2, z) = \begin{cases} x_1 x_2, & x_1 = x_3, \\ \operatorname{sgn} q_{n-2}(z), & x_1 = -x_3. \end{cases}$$

\end{lemma}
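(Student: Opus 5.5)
We will derive the recurrence by expressing $q_n$ through $q_{n-2}$ and then applying the dominance argument established just before the lemma. First check how the parameters shift. With $m=n-1$ and $l=\lfloor m/2\rfloor$, passing from $n$ to $n-2$ gives $m'=m-2$ and $l'=l-1$, and $n$ and $n-2$ have the same parity. The hypothesis $n\ge5$ ensures $l\ge2$, so $l'\ge1$ and $q_{n-2}$ is defined by the same formula. (For $n=5$ we have $l'=1$, $m'=2$, and the odd-case formula still applies.)

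Next, index $z=(z_1,\dots,z_{n-2})$ with $z_k=x_{k+2}$. The adjacent products of $z$ are $y'_k=z_kz_{k+1}=y_{k+2}$. The pair $(y'_{2r-1},y'_{2r})$ is exactly the pair $(y_{2r+1},y_{2r+2})$, the $(r+1)$-th pair for $x$, and its weight $2^{l'-r}=2^{l-(r+1)}$ matches. In the even case the correction term is $y'_{m'}=y_m$, so
\[
q_n(x)=2^{l-1}(y_1+y_2)+q_{n-2}(z).
\]
In the odd case the correction term of $q_n$ is $\sum_{r=1}^l 2^{r-l-1}y_{2r}$. Its terms with $r\ge2$ reindex as $\sum_{r'=1}^{l'}2^{r'-l'-1}y'_{2r'}$, which is the correction term of $q_{n-2}(z)$. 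This leaves the extra term $2^{-l}y_2$, so
\[
q_n(x)=2^{l-1}(y_1+y_2)+2^{-l}y_2+q_{n-2}(z).
\]
We will verify this reindexing carefully, since it is the one place where a bookkeeping error could occur.

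Now split on $x_3$, noting that $y_1y_2=x_1x_3$. If $x_1=x_3$, then $y_1=y_2=x_1x_2$, so the first pair has identical components. By the argument preceding the lemma, the first such pair determines the sign, so $\operatorname{sgn}q_n(x)=x_1x_2$.

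If $x_1=-x_3$, then $y_1+y_2=0$. In the even case $q_n(x)=q_{n-2}(z)$ immediately. In the odd case $q_n(x)=q_{n-2}(z)+2^{-l}y_2$, and we must show this perturbation cannot flip the sign. Here we use granularity: every term of $q_{n-2}$ is an integer except the correction term, whose values lie in $2^{-l'-1}\mathbb{Z}=2^{-l}\mathbb{Z}$. Moreover, the correction term is a sum of $\pm2^{-l}\cdot 2^{r'}$ with $r'\ge1$, so it lies in $2^{1-l}\mathbb{Z}$. Hence $q_{n-2}(z)\in 2^{1-l}\mathbb{Z}\setminus\{0\}$, using that $q_{n-2}$ is nowhere zero as shown above. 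This gives $|q_{n-2}(z)|\ge 2^{1-l}>2^{-l}$, so adding $2^{-l}y_2$ preserves the sign.

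This perturbation step in the odd case is the main obstacle. If the granularity argument turned out to be fragile, the fallback would be to apply the case analysis preceding the lemma directly to the sign of $q_{n-2}(z)$, using the explicit remaining-sum bounds computed there.
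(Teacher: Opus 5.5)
Your proof is correct, and your first case is the same as the paper's, but for $x_1=-x_3$ you take a different route. The paper writes no identity between $q_n$ and $q_{n-2}$. It only observes that, once the first pair vanishes, the remaining pairs of $x$ are, in order, exactly the pairs of $z$, and the last adjacent product $y_m$ is shared. The sign rule proved before the lemma (sign $s$ at the first pair with equal entries $s$, sign $y_m$ if there is none) therefore returns the same value for $q_n(x)$ and $q_{n-2}(z)$. That rule depends only on the pattern of pairs and on $y_m$, not on the weights in the odd-case correction term, so the paper never has to deal with the leftover term $2^{-l}y_2$.

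You instead prove exact identities: $q_n(x)=2^{l-1}(y_1+y_2)+q_{n-2}(z)$ for even $n$, and $q_n(x)=2^{l-1}(y_1+y_2)+2^{-l}y_2+q_{n-2}(z)$ for odd $n$; your reindexing is right. You then neutralize $2^{-l}y_2$ by the lattice observation $q_{n-2}(z)\in 2^{1-l}\mathbb{Z}\setminus\{0\}$. This is sound, given the nonvanishing of $q_{n-2}$ from the preceding discussion.

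The paper's version is shorter and does not split by parity. Yours gives a recursion for $q_n$ itself, not only for its sign. Combined with the easy bound $|q_{n-2}|\le 2^l-1$, it could be turned into a self-contained inductive proof of both the sign rule and nonvanishing. The fallback you mention at the end is exactly the paper's argument.
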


\begin{proof}
If $x_1=x_3$, then $y_1=y_2=x_1x_2$,
so the first pair determines the sign of $q_n(x)$. If $x_1=-x_3$, then $y_1+y_2=0$.
The remaining pairs, in their original order, are exactly
those associated with $z$, and the final adjacent product
is unchanged.
The sign rule established above therefore gives
$
\operatorname{sgn}q_n(x)
=\operatorname{sgn}q_{n-2}(z).
$
\end{proof}

For the next proof, set
\[
p_n(x) = \prod_{j=1}^n x_j, \quad f_n(x) = p_n(x) q_n(x), \quad g_n(x) = \operatorname{sgn} f_n(x).
\]
Since $q_n(x) \neq 0$ everywhere, $g_n(x) \in \{-1, 1\}$ is well-defined everywhere.
Fix \(n\ge5\) and put \(d=n-2\). For $Z = (z_1, \dots, z_d) \in Q_d$ and $X = (x_1, x_2, Z) \in Q_n$, we partition the vertices into two classes based on whether $x_1 = z_1$ or $x_1 = -z_1$:
\[
A_b(Z) = (z_1, b, Z), \quad B_b(Z) = (-z_1, b, Z), \quad b \in \{-1, 1\}.
\]
Let $\pi(Z) = \prod_{j=1}^d z_j$, $h(Z) = \operatorname{sgn} q_d(Z)$, and $\gamma(Z) = \operatorname{sgn} f_d(Z) = \pi(Z) h(Z)$.  Lemma~\ref{lem:sgn} gives:
$
\operatorname{sgn} q_n(A_b(Z)) = z_1 b , \quad \operatorname{sgn} q_n(B_b(Z)) = h(Z).
$
Thus,
$\operatorname{sgn} f_n(A_b(Z)) = (z_1 \cdot b \cdot \pi(Z)) \cdot (z_1 b) = \pi(Z), $
$\operatorname{sgn} f_n(B_b(Z)) = (-z_1 \cdot b \cdot \pi(Z)) \cdot h(Z) = -z_1 b \gamma(Z).$

\begin{proposition}\label{prop:connectedness}
For $n\ge3$ and $\tau\in\{-1,1\}$, let
$
V_\tau(n)=\{x\in\{-1,1\}^n:g_n(x)=\tau\}.
$
Then $V_\tau(n)$ is nonempty and $Q_n[V_\tau(n)]$
is connected.
\end{proposition}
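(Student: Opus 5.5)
The proof is by induction on $n$ in steps of two, using the decomposition $X=(x_1,x_2,Z)$ with $Z\in Q_d$, $d=n-2$, set up just before the statement. For the base cases $n=3,4$ I would compute $g_3$ and $g_4$ on all $8$ and $16$ vertices directly from the definition of $q_n$. I would then check by hand that both sign classes are nonempty and induce connected subgraphs. For $n\ge5$ I assume the statement for $d=n-2\ge3$: for each $\sigma\in\{-1,1\}$ the set $W_\sigma=\{Z:\gamma(Z)=\sigma\}$ is nonempty and induces a connected subgraph of $Q_d$.

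The first step is to use the sign formulas $g_n(A_b(Z))=\pi(Z)$ and $g_n(B_b(Z))=-z_1b\gamma(Z)$ to describe $V_\tau(n)$ explicitly. It consists of two parts.
\begin{itemize}
\item The \emph{$A$-blocks} $\{A_1(Z),A_{-1}(Z)\}$ for every $Z$ with $\pi(Z)=\tau$. Each block is an edge, obtained by flipping $x_2$.
\item For every $Z$, exactly one $B$-vertex $B_{b}(Z)$, namely the one with $b=b_\tau(Z):=-\tau z_1\gamma(Z)$.
\end{itemize}
Nonemptiness is then immediate. The adjacencies I need are these:
\begin{itemize}
\item Flipping $x_1$ joins $A_b(Z)$ and $B_b(Z)$.
\item Flipping $z_j$ with $j\ge2$ sends $A_b(Z)\mapsto A_b(Z')$ and $B_b(Z)\mapsto B_b(Z')$.
\item Flipping $z_1$ exchanges the two types: $A_b(Z)\sim B_b(Z^*)$ and $B_b(Z)\sim A_b(Z^*)$, where $Z^*$ is $Z$ with $z_1$ negated.
\end{itemize}
Note that $\pi(Z^*)=\pi(Z')=-\pi(Z)$.

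The second step shows that, for fixed $\sigma$, the $B$-vertices $\mathcal B_\sigma=\{B_{b_\tau(Z)}(Z):Z\in W_\sigma\}$ lie in one component of $Q_n[V_\tau(n)]$. I walk along a path in $Q_d[W_\sigma]$, which exists by induction. A step flipping $z_j$ with $j\ge2$ keeps $z_1$ and $\gamma$, hence keeps $b$, so the two $B$-vertices are adjacent. A step $Z\to Z^*$ negates $b$. Then either $\pi(Z^*)=\tau$, giving the detour $B_b(Z)\sim A_b(Z^*)\sim A_{-b}(Z^*)\sim B_{-b}(Z^*)$. Or $\pi(Z)=\tau$, giving the detour $B_b(Z)\sim A_b(Z)\sim A_{-b}(Z)\sim B_{-b}(Z^*)$. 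Both detours stay in $V_\tau(n)$. Every $A$-block at $Z$ contains $A_{b_\tau(Z)}(Z)$, which is adjacent to $B_{b_\tau(Z)}(Z)\in\mathcal B_{\gamma(Z)}$. So every vertex of $V_\tau(n)$ lies in the component of $\mathcal B_1$ or of $\mathcal B_{-1}$.

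The remaining and main obstacle is to link $\mathcal B_1$ with $\mathcal B_{-1}$. I would look at an $A$-block at some $Z$ with $\pi(Z)=\tau$. This block meets $\mathcal B_{\gamma(Z)}$ via $x_1$. It is also adjacent via $z_1$ to $B_{b}(Z^*)$ for both values of $b$, and one of these two vertices lies in $V_\tau(n)$ and belongs to $\mathcal B_{\gamma(Z^*)}$. So it suffices to find $Z$ with $\pi(Z)=\tau$ and $\gamma(Z^*)\ne\gamma(Z)$, which is equivalent to $h(Z^*)=h(Z)$.

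To find such a $Z$, I would use Lemma~\ref{lem:sgn} on $q_d$ (or the explicit sign rule when $d\in\{3,4\}$). Choose $Z$ with $z_1=z_3$, so that $h(Z)=z_1z_2$. Then choose $Z^*$ so that its first pair also determines the sign and agrees, or otherwise fix its later coordinates so that $h(Z^*)=z_1z_2$. Finally adjust a far coordinate to make $\pi(Z)=\tau$. If this explicit choice is awkward for small $d$, the fallback is a direct check of the cases $d=3,4$ together with the recursive rule for larger $d$.
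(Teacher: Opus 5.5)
Your proposal is correct and is essentially the paper's proof: the same $A$/$B$ decomposition over $Z\in Q_d$, the same detour $B\sim A\sim A\sim B$ across a $z_1$-flip, and the same link between the two $\gamma$-classes through an $A$-block at some $Z$ with $\pi(Z)=\tau$ and $h(Z^*)=h(Z)$. The only loose end is the witness for that last step. Your $z_1=z_3$ recipe cannot produce one when $d=4$ and $\tau=-1$: there $h(Z^*)=h(Z)$ forces $z_4=z_2$, hence $\pi(Z)=1$, so your fallback is genuinely needed. The paper instead takes $u=(1,\dots,1)$ and $v=u^*=(-1,1,\dots,1)$, with $h(u)=h(v)=1$ and $\pi(u)=1=-\pi(v)$, which serve as $Z=u$ for $\tau=1$ and $Z=v$ for $\tau=-1$ uniformly for all $d\ge3$.
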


\begin{proof}
We argue by induction on $n$, with base cases $n=3,4$.

Consider the case $n=3$. Here, $m = 2$ and $l = 1$. By definition, we have:
$q_3(x) = y_1 + \frac{3}{2}y_2,$
$f_3(x) = x_3 + \frac{3}{2} x_1.$
Thus, the sign of $f_3(x)$ is determined entirely by $x_1$. After fixing $x_1 \in \{-1, 1\}$, the four vertices formed by the remaining two coordinates are:
$
(x_1, 1, 1), \quad (x_1, -1, 1), \quad (x_1, -1, -1), \quad (x_1, 1, -1).
$
These four vertices form a cycle \[(x_1, 1, 1) \sim (x_1, -1, 1) \sim (x_1, -1, -1) \sim (x_1, 1, -1) \sim (x_1, 1, 1).\] Thus both the positive and negative supports induce
connected subgraphs.

When $n=4$, we have $m = 3$ and $l = 1$:
$
q_4(x) = y_1 + y_2 + y_3, \quad f_4(x) = x_3 x_4 + x_1 x_4 + x_1 x_2.
$
Direct verification shows that:
$V_+(4) = \{f_4 > 0\}$ contains $8$ vertices, forming the following cycle:
\[
(+,+,+,+)\sim(+,+,-,+)\sim(+,+,-,-)\sim(-,+,-,-)\sim(-,-,-,-)\sim
\]
\[
(-,-,+,-)\sim(-,-,+,+)\sim(+,-,+,+)\sim(+,+,+,+);
\]
$V_-(4) = \{f_4 < 0\}$ also contains $8$ vertices, forming the following cycle:
\[
(+,+,+,-)\sim(+,-,+,-)\sim(+,-,-,-)\sim(+,-,-,+)\sim(-,-,-,+)\sim
\]
\[(-,+,-,+)\sim(-,+,+,+)\sim(-,+,+,-)\sim(+,+,+,-).
\]

Therefore, the conclusion holds for both $n=3$ and $n=4$. Let $d = n-2 \ge 3$. By the inductive hypothesis, the set of vertices $V_\tau(d)$ with the same sign in $Q_d$ induces a connected subgraph; we now prove that the set of vertices $V_\tau(n)$ with the same sign in $Q_n$ also induces a connected subgraph. We first show that all type-B vertices lie in a single connected component of \(Q_n[V_\tau(n)]\), and then connect every type-A vertex in $V_\tau(n)$ to this component.
For a vertex $A_b(Z)$, its sign  is always $\pi(Z)$. For a given target sign $\tau \in \{-1, 1\}$, a type-B vertex satisfies $\operatorname{sgn} f_n(B_b(Z)) = \tau$ if and only if $-z_1 b \gamma(Z) = \tau$. Let
$
b_\tau(Z) := -\tau z_1 \gamma(Z), \quad \widehat{B}_\tau(Z) := B_{b_\tau(Z)}(Z) \in V_\tau(n).
$
Thus, for each $Z \in Q_d$, there exists a unique $b \in \{-1, 1\}$ such that the vertex $\widehat{B}_\tau(Z)$ has sign $\tau$.
Fix $\tau\in\{-1,1\}$.
We first connect type-B vertices with the same value of
$\gamma$, and then connect the two resulting groups.
All paths are taken in $Q_n[V_\tau(n)]$.
 Let $Z, Z' \in Q_d$ have $\gamma(Z) = \gamma(Z')$. By the induction hypothesis, it suffices to consider adjacent vertices \(Z,Z'\) with \(\gamma(Z)=\gamma(Z')\).
 
If the flipped coordinate is at position $j \ge 2$, then $z_1 = z_1'$ and $b_\tau(Z) = b_\tau(Z')$; thus, $\widehat{B}_\tau(Z)$ and $\widehat{B}_\tau(Z')$ differ by only one coordinate and are adjacent in $Q_n$.
If the flipped coordinate is the first one, $z_1$ (i.e., $z_1' = -z_1$), then $\pi(Z') = -\pi(Z)$. Since their $\pi$ values have opposite signs, one of them must equal $\tau$. Assume $\pi(Z) = \tau$; then, according to the sign properties of type-A vertices, $\operatorname{sgn} f_n(A_b(Z)) = \pi(Z) = \tau$ holds for both $b \in \{-1, 1\}$. Note that:
$\widehat{B}_\tau(Z) = (-z_1, b_\tau(Z), Z) \sim A_{b_\tau(Z)}(Z) = (z_1, b_\tau(Z), Z) \sim A_{-b_\tau(Z)}(Z) = (z_1, -b_\tau(Z), Z).$
By definition, $-b_\tau(Z) = \tau z_1 \gamma(Z) = -\tau z_1' \gamma(Z') = b_\tau(Z')$, and $-z_1' = z_1$. Therefore, $A_{-b_\tau(Z)}(Z)$ and $\widehat{B}_\tau(Z') = (-z_1', b_\tau(Z'), Z')$ differ only in the sign of the third coordinate, making them directly adjacent. Thus, there exists a path lying entirely within $V_\tau(n)$:
\[
\widehat{B}_\tau(Z) \sim A_{b_\tau(Z)}(Z) \sim A_{-b_\tau(Z)}(Z) \sim \widehat{B}_\tau(Z').
\]
Therefore, type-B vertices corresponding to the same value of $\gamma(Z)$ are connected to each other within $V_\tau(n)$.

Connection between type-B vertices with different $\gamma(Z)$ values. Let $u=(1,\dots,1)$ and $v=(-1,1,\dots,1)$.
The definition of $q_d$ gives $h(u)=h(v)=1$.
Since $\pi(u)=1$ and $\pi(v)=-1$, we obtain
$
\gamma(u)=1,\gamma(v)=-1,
 b_\tau(u)=b_\tau(v)=-\tau.
$
Let $w = (1, \dots, 1) \in \{-1, 1\}^{d-1}$; then $\widehat{B}_\tau(u) = (-1, -\tau, 1, w)$ and $\widehat{B}_\tau(v) = (1, -\tau, -1, w).$

If $\tau = 1$, $\pi(u) = 1 = \tau$, and the point $A_{-1}(u) = (1, -1, 1, w) \in V_1(n)$ forms a path:
\[
\widehat{B}_1(u) \sim A_{-1}(u) \sim \widehat{B}_1(v);
\]

if $\tau = -1$, $\pi(v) = -1 = \tau$, and the point $A_1(v) = (-1, 1, -1, w) \in V_{-1}(n)$ forms a path:
\[
\widehat{B}_{-1}(u) \sim A_1(v) \sim \widehat{B}_{-1}(v).
\]
Since type-B vertices with the same $\gamma$ value are connected within $V_\tau(n)$, the connection between $\widehat{B}_{\tau}(u)$ and $\widehat{B}_{\tau}(v)$ implies that the sets of type-B vertices with different $\gamma$ values are mutually connected within $V_\tau(n)$. Hence all type-B vertices lie in the same connected component of \(Q_n[V_\tau(n)]\).

Finally, consider type-A points: any type-A point satisfying $\operatorname{sgn} f_n(A_b(Z)) = \tau$ must satisfy $\pi(Z) = \tau$.  $A_{b_\tau(Z)}(Z)$ and $\widehat{B}_\tau(Z)$ differ only in their first coordinate and are directly adjacent; meanwhile, the other  type-A point $A_{-b_\tau(Z)}(Z)$ differs from $A_{b_\tau(Z)}(Z)$ only in its second coordinate and is likewise directly adjacent:
\[
A_{-b_\tau(Z)}(Z) \sim A_{b_\tau(Z)}(Z) \sim \widehat{B}_\tau(Z).
\]
Hence every type-A vertex in $V_\tau(n)$ belongs to
the component containing all type-B vertices.
Therefore $Q_n[V_\tau(n)]$ is connected.
\end{proof}

\begin{proof}[Proof of Theorem~\ref{thm:main}]
For each $d\ge3$, the construction gives $f_d\in E_{d-2}(d)$.
Proposition~\ref{prop:connectedness} shows that its positive and
negative supports are nonempty and connected. Given $n\ge3$ and
$1\le i\le n-2$, set $d=i+2$ and apply Lemma~\ref{lem:extension}.
The function
\[
 F(x_1,\dots,x_n)=f_d(x_1,\dots,x_d)
\]
belongs to $E_i(n)$ and has exactly two strong nodal domains.
\end{proof}

\end{document}